\newcommand {\zmod}[1]{\mathbb{Z}/_{\!#1}}
\newtheorem{theorem}{Theorem}
\newtheorem*{theorem*}{Theorem}
\newtheorem{corollary}[theorem]{Corollary}
\newtheorem{proposition}[theorem]{Proposition}
\newtheorem{lemma}[theorem]{Lemma}
\theoremstyle{definition}
\newtheorem*{remark}{Remark}
\title[Exotic actions on product manifolds]{Constructions of exotic actions on product manifolds with an asymmetric factor}
\author{Zbigniew Błaszczyk and Marek Kaluba}
\thanks{The authors have been supported by the National Science Centre grants 2014/12/S/ST1/00368 and 2015/19/B/ST1/01458.}
\subjclass[2010]{57S25}
\keywords{asymmetric manifold, degree of symmetry, diagonal action, exotic action}
\date{}
\begin{document}
\maketitle
\begin{abstract}
We explore transformation groups of manifolds of the form \mbox{$M\times S^n$}, where $M$ is an asymmetric manifold, i.e. a manifold which does not admit any non-trivial action of a finite group. In particular, we prove that for $n=2$ there exists an infinite family of distinct non-diagonal effective circle actions on such products. A similar result holds for actions of cyclic groups of prime order. We also discuss free circle actions on $M \times S^1$, where $M$ belongs to the class of ``almost asymmetric'' manifolds considered previously by V.~Puppe and M.~Kreck.
\end{abstract}

\section{Introduction}

A ubiquitous problem in transformation groups is to decide whether certain groups can act in a specific manner on a given class of spaces. We investigate an instance of this situation in the context of diagonal actions on manifolds given as Cartesian products.

Let $X$ and $Y$ be smooth $G$-manifolds. \textit{Diagonal action} $\Delta$ on the product $X\times Y$ is given by the formula
\[ \Delta\big(g,(x,y)\big) = (gx, gy) \textnormal{ for $g \in G$, $x \in X$ and $y \in Y$.} \]
More generally, an action $\phi\colon G\times (X \times Y)\to X\times Y$ is said to be a \emph{product action} provided that $\phi$ is smoothly conjugate to a diagonal action, i.e. there exists a $G$\nobreakdash-\hspace{0pt}equivariant diffeomorphism $f\colon X\times Y\to X\times Y$ satisfying $f\circ\phi = \Delta \circ (\textnormal{id}_G\times f)$.

The starting point for our work is the following question: \textit{How to distinguish whether an arbitrarily given $G$-action on $X \times Y$ is a product one?} Intuitively, products of manifolds with high degree of symmetry should admit plenty of exotic\footnote{We follow Adem and Smith \cite{AdemSmith2001} and call an action ``exotic'' when it is not conjugate to a product action.} actions. Indeed, work on the rank conjecture for products of spheres shows that actions on products can be very complicated, see e.g. Hambleton and \"{U}nl\"{u} \cite{HambletonUnlu}. However, as degree of symmetry of one of the manifolds in the product decreases, there might be a ``threshold dimension'' below which all actions become conjugate to product ones. As spheres have the highest possible degree of symmetry (Eisenhart \cite{Eisenhart}), the following seems to be a very natural testing example.
\begin{quote}
\emph{Let $M$ be an asymmetric manifold, i.e. a manifold which does not admit any non-trivial action of a finite group. What are the possible actions of $G$ on $M\times S^n$, depending on $G$ and $n$?}
\end{quote}

There is a strong sentiment expressed, among others, by Raymond and Schultz \cite{Browder1978}, Kreck \cite{Kreck2009} and Puppe \cite{Puppe2007} that ``most'' manifolds are asymmetric. On the other hand, in private communication Pawałowski challenges (teasingly) this view by suggesting that there should be at least as many symmetric manifolds as asymmetric ones. Indeed, a product of an asymmetric manifold with one that has symmetries yields a manifold that clearly again has symmetries.

The purpose of this note is to shed some light on the influence of the highly symmetric factor on symmetries of the whole product. In particular, we prove that for every asymmetric manifold $M$ there exist smooth actions of the circle group (or a cyclic group of prime order) on $M\times S^2$ which do not come solely from actions on~$S^2$, and a similar result follows for involutions on $M\times S^1$ (Theorems \ref{thm:non-product-actions} and {\ref{thm:Z2-non-product-actions}, respectively). This seems to stand in contrast with the case of free actions of the circle group on a class of $7$-manifolds given as $M\times S^1$, where $M$ is an ``almost asymmetric'' manifold: we present some evidence that such actions are product ones (Proposition~\ref{prop:product-actions}). We also discuss a possible approach to the problem of detecting product actions among free orientation preserving actions of cyclic groups of prime order on $M \times S^1$ (Theorem~\ref{thm:detecting_non-product_actions}).\medskip

\noindent\textbf{Notation.} We work in the smooth category. In particular, all manifolds and actions are taken to be smooth. Furthermore, unless otherwise stated, all manifolds are assumed to be compact.

\section{Asymmetric and ``almost asymmetric'' manifolds}
\label{sec:Asymmmetric_manifolds}

A closed manifold is said to be \textit{asymmetric} if it does not admit any non-tri\-vial action of a finite group. Such manifolds appear as soon as in dimension~$3$ in the shape of bundles over the circle with fibre an orientable surface of genus at least~$3$ (Raymond and Tollefson~\cite{Raymond1976}). Every $3$\nobreakdash-\hspace{0pt}dimensional asymmetric manifold is also aspherical, i.e. it has a contractible universal cover. Borel~\cite{Borel1983} provided a sufficient criterion for an aspherical manifold $M$ to be asymmetric: its $\pi_1$ is required to have trivial center and torsion free outer automorphism group $\textrm{Out}\big(\pi_1(M)\big)$.

Non-aspherical asymmetric manifolds can be observed in dimension~$4$ (Bloomberg~\cite{Bloomberg1975}). Curiously enough, all of known examples have non-trivial fundamental groups: the existence of a simply connected asymmetric manifold is an open problem.

Let us now briefly describe the ``least symmetric'' simply connected manifolds known at present. Consider a map $f \colon \mathbb{Z}^6 \to \mathbb{Z}$ (attributed to A.~Iarrobino by Puppe \cite{Puppe1995}) given by the polynomial
\begin{align*}
f(x_1,\ldots,x_6)  =& \;6\big(x_1x_4^2 - x_1^2x_4 +x_2x_4^2 + x_2x_4^2 -
 x_2^2x_5 + x_2x_5^2 \\
& + x_3^2x_4 - x_3x_4^2 + x_3^2x_6 + x_3x_6^2 + x_5^2x_6 + x_5x_6^2\\
& + x_1x_2x_4 +x_1x_2x_5 + x_1x_3x_6 +x_2x_4x_6 + x_3x_5x_6\\
&+ x_4x_5x_6 +
x_4x_5x_6 + x_4^3 +x_6^3\big).
\end{align*}

\noindent This map determines a trilinear symmetric form, hence also a cup product-like structure
$\mu\colon\mathbb{Z}^6 \oplus \mathbb{Z}^6 \oplus \mathbb{Z}^6 \to \mathbb{Z}$. It follows from Wall~\cite{Wall1966} that there exists an infinite family $\mathcal{M}_\mu$ consisting of simply connected $6$-dimensional closed manifolds $M$ such that $H^2(M;\mathbb{Z})\cong \mathbb{Z}^6$, $H^3(M;\mathbb{Z}) = 0$, and the iterated product $H^2(M;\mathbb{Z})\times H^2(M;\mathbb{Z})\times H^2(M;\mathbb{Z}) \to \mathbb{Z}$ is prescribed by $\mu$. All such $M$'s are distinguished by the first Pontryagin class. An appropriate choice of this class allows to exclude the existence of non-trivial actions of finite groups on $M$:

\begin{theorem*}[Puppe \cite{Puppe1995}]
There exists an infinite family $\mathcal{M}_{As}\subset \mathcal{M}_\mu$ of manifolds with the property that any $M \in \mathcal{M}_{As}$ does not admit any non-trivial action of a finite group, with the possible exception of an orientation reversing involution.
\end{theorem*}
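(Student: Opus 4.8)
The plan is to separate a purely algebraic rigidity property of the cubic form $f$ from the transformation-group input, and to let the freedom in the Pontryagin class carve out the infinite subfamily. First I would reduce to cyclic groups of prime order: a non-trivial finite group acting on $M$ has non-trivial image in $\mathrm{Diff}(M)$, and by Cauchy's theorem this image contains an element of prime order $p$, yielding a non-trivial $\mathbb{Z}/p$-action. Hence it suffices to show, for a suitable infinite set of Pontryagin classes, that no $\mathbb{Z}/p$ acts non-trivially on the corresponding $M$, the sole admissible exception being a single orientation-reversing involution ($p=2$).

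Next I would analyse the induced action on cohomology. Since $M$ is simply connected with $H^2(M;\mathbb{Z})\cong\mathbb{Z}^6$ and cohomology concentrated in even degrees, a diffeomorphism $h$ induces a graded ring automorphism of $H^*(M;\mathbb{Z})$, and its restriction $A$ to $H^2$ satisfies $f\circ A=\pm f$, with sign $+$ exactly when $h$ preserves orientation (the sign being the degree of $h$ on $H^6$). The crucial algebraic input — and what I expect to be the main obstacle — is to prove that the only integral linear automorphisms of $\mathbb{Z}^6$ carrying $f$ to $\pm f$ are $\pm\mathrm{id}$, i.e. that Iarrobino's cubic has trivial symmetry group; here $-\mathrm{id}$ corresponds to the orientation-reversing case because $f$ is an odd form. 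Establishing this amounts to pinning down geometric data canonically associated with $f$ — candidates are the singular locus of the associated projective cubic and distinguished vectors extracted from its Hessian or from the incidence pattern of the monomials — which any symmetry must preserve, and then verifying by a finite but delicate computation that the identity is the only integral symmetry fixing that data.

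Granting this rigidity, for an odd prime $p$ the orientation homomorphism $\mathbb{Z}/p\to\{\pm1\}$ is trivial, so every element acts as $\mathrm{id}$ on $H^2$; by Poincaré duality, the ring structure, and the vanishing of odd-degree cohomology, the whole action is then trivial on $H^*(M;\mathbb{Z})$. It remains to exclude non-trivial cohomologically trivial $\mathbb{Z}/p$-actions, and this is where $p_1$ intervenes. The Lefschetz fixed point theorem gives $\chi(F)=\chi(M)$ for the fixed-point set $F$, and a preliminary analysis of the admissible fixed-point data limits the candidate primes to a finite set. For each such $p$, using that $M$ is spin, the Atiyah–Bott fixed point formula for the Dirac operator writes the rigid equivariant $\hat{A}$-genus as a sum of local terms over the components of $F$, each governed by the normal weights and the restriction of $p_1(M)$; comparing this with the global value forces a congruence on $p_1$ that an appropriate choice violates. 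The case $p=2$ with $A=-\mathrm{id}$ is exactly the orientation-reversing involution allowed by the statement, whereas $A=+\mathrm{id}$ is removed by the same analysis.

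Finally, to obtain an infinite family, I would observe that the rigidity of $f$ holds uniformly across $\mathcal{M}_\mu$, since it depends only on the cup product, while the fixed-point obstruction rules out, for each of the finitely many candidate primes, only finitely many values of the first Pontryagin class. As $\mathcal{M}_\mu$ is parametrised by $p_1$ over an infinite set and distinct values yield non-diffeomorphic manifolds by Wall's classification, infinitely many admissible choices survive and define the desired subfamily $\mathcal{M}_{As}$; isolating one explicit condition on $p_1$ that works simultaneously for all relevant primes is the remaining bookkeeping.
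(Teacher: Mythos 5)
First, a framing remark: the paper does not prove this statement --- it is imported verbatim from Puppe \cite{Puppe1995} --- so your proposal can only be measured against the argument that the paper points to through its surrounding text and later citations. Several of your steps do match that argument: the reduction to $\mathbb{Z}/p$ via Cauchy's theorem; the observation that a finite-order diffeomorphism induces $A$ on $H^2$ with $f\circ A=\pm f$, the sign being the degree on $H^6$; the rigidity statement that only $\pm\mathrm{id}$ preserve the Iarrobino form up to sign (this is exactly what the paper later invokes as Puppe \cite[Theorem 4]{Puppe2007}), so that orientation-preserving actions are cohomologically trivial and $-\mathrm{id}$ accounts for the admissible orientation-reversing involution; and the use of Wall's classification together with the $p_1$-parametrization to produce infinitely many pairwise non-diffeomorphic manifolds.

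The genuine gap is in the step that carries all the remaining weight: excluding \emph{non-trivial cohomologically trivial} $\mathbb{Z}/p$-actions. Puppe's mechanism here --- the reason Iarrobino's form is chosen in the first place, and the subject of the title of \cite{Puppe1995} --- is deformation-theoretic: for a cohomologically trivial action with fixed set $F$, localization in Borel equivariant cohomology exhibits $H^*(F;\mathbb{F}_p)$ as a deformation (degeneration) of $H^*(M;\mathbb{F}_p)$, and the Iarrobino algebra has ``small tangent space'' --- no essential deformations, no negative-degree derivations (compare the paper's own use of Puppe \cite[Lemma 1]{Puppe2007} in its Lemma 5) --- which forces $H^*(F;\mathbb{F}_p)\cong H^*(M;\mathbb{F}_p)$, hence $F=M$ and triviality of the action. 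Your proposed replacement of this engine by Lefschetz theory plus the Atiyah--Bott formula does not work as described, for three reasons. (i) Rigidity of the equivariant Dirac index is an $S^1$-statement (Atiyah--Hirzebruch); for a finite cyclic group whose action need not extend to the circle, the index at $g\neq 1$ is an unknown character, so there is no ``global value'' to compare against --- one only gets integrality constraints, which are far weaker. (ii) Your claim that the fixed-point data confines the candidate primes to a finite set is unsupported: Lefschetz gives $\chi(F)=\chi(M)=14\neq 0$ (which does usefully exclude free actions), and Smith theory gives $\dim_{\mathbb{F}_p} H^*(F;\mathbb{F}_p)=14$ with vanishing odd part, but these hold for \emph{every} prime and exclude none. (iii) Most fundamentally, index-theoretic local data (normal weights, restrictions of $p_1$) do not see the cup-product structure, which is precisely what distinguishes $M$ from manifolds such as $\mathbb{C}P^3$ that satisfy all such integrality constraints and nevertheless carry many non-trivial cohomologically trivial actions. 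Consequently, for each prime you would need a single choice of $p_1$ to contradict \emph{every} a priori admissible configuration $(F,\nu)$ of fixed set and normal bundle, and your sketch offers no mechanism for that universal quantification; the ring-rigidity/deformation argument is exactly the missing ingredient that performs it.
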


In what follows we will frequently refer to manifolds from the family $\mathcal{M}_{As}$ as \textit{almost asymmetric}. Olbermann \cite{Olbermann2011} showed that these manifolds in fact admit smooth involutions with $3$\nobreakdash-dimensional fixed point sets.

\begin{remark}
We reiterate that we are discussing smooth manifolds. Certain non-smoothable simply connected $6$\nobreakdash-\hspace{0pt}manifolds were shown to be asymmetric by Kreck \cite{Kreck2009, Kreck2011}, albeit in a different sense: they admit no actions with equivariant tubular neighbourhoods.
\end{remark}

\section{Constructing exotic actions on $M \times S^2$}
\label{sec:Exotic-actions}

Throughout this section $G=S^1$ or $G=\mathbb{Z}_{p}$, $p$ a prime.

\begin{theorem} \label{thm:non-product-actions}
If $M$ is an $m$-dimensional asymmetric manifold for $m\geqslant 3$, then there exists an effective exotic $G$\nobreakdash-action on $M\times S^2$.
\end{theorem}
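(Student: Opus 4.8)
The plan is to produce one explicit action and then separate it from every product action by a characteristic-class invariant of its fixed-point data. First I would pin down what product actions can look like. Since $M$ is asymmetric, neither $S^1$ nor $\mathbb{Z}_p$ acts non-trivially on $M$, so a diagonal action is forced to be trivial on the first factor; hence every product action is smoothly conjugate to one of the form $(x,y)\mapsto(x,gy)$ with $G$ acting only on $S^2$. Invoking the classification of effective $G$-actions on $S^2$ (every such $S^1$-action is conjugate to a rotation, and the relevant $\mathbb{Z}_p$-actions arise by restriction), this shows that up to equivariant diffeomorphism there is essentially one product action: trivial on $M$, standard rotation on $S^2$. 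Its fixed-point set is two horizontal slices $M\times\{N\}\sqcup M\times\{S\}\cong M\sqcup M$, each with trivial equivariant normal bundle.

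Next I would build the candidate exotic action by letting the rotation axis vary over $M$. Choose a smooth map $\sigma\colon M\to S^2$ and, for each $x$, let $\rho_x\colon S^1\to SO(3)$ be the rotation about the oriented axis $\sigma(x)$; set
\[ \Phi\big(g,(x,y)\big) = \big(x,\rho_x(g)\,y\big). \]
A direct check (rotations about a common axis commute) shows $\Phi$ is a smooth $S^1$-action, effective because $\rho_x(g)\neq\mathrm{id}$ whenever $g\neq 1$; restricting along $\mathbb{Z}_p\hookrightarrow S^1$ handles the cyclic case with identical fixed-point behaviour. The fixed-point set of $\Phi$ is $\{(x,\pm\sigma(x))\}\cong M\sqcup M$, namely the graphs of $\pm\sigma$.

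The distinguishing invariant comes from the normal bundle of the fixed component $F_+=\mathrm{graph}(\sigma)$. Identifying $F_+\cong M$ by the projection, this normal bundle is $\sigma^*TS^2$ with $S^1$ acting fibrewise by rotation, so as a complex line bundle it has first Chern class $c_1=\sigma^*e(TS^2)=2\,\sigma^*u$, where $u$ generates $H^2(S^2;\mathbb{Z})$. An equivariant diffeomorphism must carry fixed sets to fixed sets and equivariant normal bundles to equivariant normal bundles, hence preserve the vanishing or non-vanishing of this class; since the product action has $c_1=0$, it suffices to exhibit $\sigma$ with $2\,\sigma^*u\neq 0$ in $H^2(M;\mathbb{Z})$ to conclude that $\Phi$ is exotic. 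For $G=\mathbb{Z}_p$ with $p$ odd the same computation works after reduction mod $p$, where $2$ is invertible, so one only needs $\sigma^*u\neq 0$ in $H^2(M;\mathbb{Z}_p)$.

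The main obstacle is precisely the existence of such a $\sigma$, i.e. realizing a suitable class by a map to $S^2=\mathbb{CP}^1$. When $H^2(M;\mathbb{Z})$ carries a non-$2$-torsion class, obstruction theory settles realizability in low dimensions, since the pair $(\mathbb{CP}^\infty,\mathbb{CP}^1)$ is $3$-connected and the first obstruction to compressing a classifying map into $\mathbb{CP}^1$ lives in degree $\geq 5$. The genuinely delicate regime is an asymmetric $M$ with $H^2(M;\mathbb{Z})$ small (vanishing or purely $2$-torsion, with $H^2(M;\mathbb{Z}_p)$ also trivial): there the primary Chern-class invariant degenerates, and I would instead take $\sigma$ essential through higher homotopy — for instance pulling a generator of $\pi_m(S^2)$ back along a degree-one collapse $M\to S^m$ — and detect exoticness by a secondary invariant of the embedding $F_+\hookrightarrow M\times S^2$ rather than by its normal bundle. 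Establishing non-product-ness in this low-cohomology case, rather than constructing the action itself, is where the real work lies.
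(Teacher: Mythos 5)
Your construction and your detection principle are both sound as far as they go: the varying-axis family $\Phi$ is a genuine smooth effective action, and an equivariant diffeomorphism does carry fixed components to fixed components together with their equivariant normal bundles, so a fixed component whose normal bundle has non-vanishing Euler/Chern class cannot occur for a product action. The genuine gap is one of quantification: the theorem asserts an exotic action for \emph{every} asymmetric $M$ with $m\geqslant 3$, whereas your argument requires a map $\sigma\colon M\to S^2$ with $2\sigma^*u\neq 0$ in $H^2(M;\mathbb{Z})$ (or $\sigma^*u\neq 0$ mod $p$), and asymmetry imposes no such cohomological condition on $M$. For instance, hyperbolic homology $3$-spheres with trivial isometry group are asymmetric, and for them $H^2(M;\mathbb{Z})=0$, so every invariant you compute vanishes identically. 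Your fallback in this degenerate regime --- take $\sigma$ essential in $\pi_m(S^2)$ and appeal to an unnamed ``secondary invariant'' --- is not an argument: you neither define the invariant nor show it obstructs equivalence, and it is not even clear that $\Phi$ with a null-cohomologous $\sigma$ \emph{is} exotic, so the approach may genuinely fail there rather than merely be incomplete. (A smaller slip: the first obstruction to compressing $M\to\mathbb{CP}^\infty$ into $\mathbb{CP}^1$ lies in $H^4\big(M;\pi_4(\mathbb{CP}^\infty,\mathbb{CP}^1)\big)\cong H^4(M;\mathbb{Z})$, i.e.\ in degree $4$, not degree $\geqslant 5$, so realizability already needs care for $m\geqslant 4$.)

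Compare this with how the paper sidesteps the issue: the exotic ingredient is imported from outside $M$ altogether. One takes (Kervaire) a non-simply connected homology $m$-sphere $\Sigma$ bounding a compact contractible manifold $X$, lets $G$ act on $X\times D(V)$ trivially on $X$ and orthogonally on a $2$-dimensional representation $V$, and uses the $h$-cobordism theorem (here $m+3\geqslant 6$) to identify $X\times D(V)\cong D^{m+3}$; the boundary is then $S^{m+2}$ with a $G$-action whose fixed set is $\Sigma$. An equivariant connected sum of this sphere with the product action on $M\times S(V\oplus\mathbb{R})$, performed at fixed points, yields an action on $M\times S^2$ with fixed set $M\sqcup(M\#\Sigma)$, whose components have non-isomorphic fundamental groups by the Grushko--von Neumann theorem; by contrast, the fixed set of any product action is empty, connected, or diffeomorphic to $M\sqcup M$, and $M\sqcup(M\#\Sigma)$ is none of these. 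So both you and the paper detect exoticness through fixed-point data, but the paper alters the \emph{diffeomorphism type} of a fixed component rather than its normal bundle, and since $\Sigma$ exists for every $m\geqslant 3$ independently of the topology of $M$, that argument covers all asymmetric $M$ at once. To repair your proof you would either have to weaken the theorem by adding a cohomological hypothesis on $M$, or graft in something like the paper's connected-sum step to handle the degenerate case.
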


We provide a proof for $G=\mathbb{Z}_{p}$. The case of $G=S^1$ is analogous. Our approach was inspired by Hsiang \cite{Hsiang1964}.

\begin{proof}
Recall that any $G$\nobreakdash-action on $S^2$ is linear, hence any non-trivial product $G$-action on $M\times S^2$ has the fixed point set either diffeomorphic to \mbox{$M\sqcup M$} or empty (the latter can happen only if $p=2$).
Thus in order to prove the theorem, it suffices to construct a $G$-action on $M\times S^2$ with a non-empty fixed point set different from $M\sqcup M$.

Consider a non-simply connected $m$-dimensional homology sphere $\Sigma$ that bounds a contractible manifold $X$ (see Kervaire \cite[Theorem 3]{Kervaire1969}).
Take any non-trivial orthogonal $2$\nobreakdash-dimensional $G$\nobreakdash-representation $V$ and equip the product $X\times D(V)$ with a diagonal $G$-action by setting
\[g\!\cdot\! (x,y)\mapsto (x,gy) \textnormal{ for any $g\in G$, $x \in X$ and $y \in D(V)$.} \]
Since $m+3\geqslant 6$ and $\partial \big(X\times D(V)\big)$ is simply connected by the Seifert--van Kampen theorem, the $h$-cobordism theorem yields a diffeomorphism
\[ X \times D(V)\cong D^{m+3}.\]
The action restricted to the boundary sphere $S^{m+2}$ has the following two properties:
\begin{enumerate}
\item[(1)] $(S^{m+2})^G=\Sigma$, and
\item[(2)] the tangential $G$-module at any point $x \in \Sigma\subset S^{m+2}$ is isomorphic to $\mathbb{R}^m\oplus V$.
\end{enumerate}
To conclude the proof, identify $M\times S^2 \cong M\times S(V\oplus \mathbb{R})$, choose a fixed point $y\in M\times \big\{(0,0,1)\big\}$ and consider the equivariant connected sum along disk $G$-neighborhoods of $x$ and $y$:
\[{S^{m+2}}\#\big(M\times\! S(V\oplus \mathbb{R})\big)\cong M\times S^2. \]
Clearly, the induced action on the right hand side realizes $M\sqcup (M\#\Sigma)$ as the fixed point set. But $M$ and $M\#\Sigma$ have non-isomorphic fundamental groups, e.g. by the Gru\-shko\nobreakdash--von Neu\-mann theorem.
\end{proof}

\begin{theorem}\label{thm:Z2-non-product-actions}
If $M$ is an $m$-dimensional asymmetric manifold for $m\geqslant 4$, then there exists an exotic involution on $M\times S^1$.
\end{theorem}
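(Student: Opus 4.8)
The plan is to mimic the proof of Theorem~\ref{thm:non-product-actions}, replacing the two\nobreakdash-dimensional representation by a one\nobreakdash-dimensional (sign) representation and $S^2$ by $S^1$. First I would pin down the product involutions. Since $M$ is asymmetric, any diagonal $\mathbb{Z}_2$\nobreakdash-action on $M\times S^1$ acts trivially on the first factor, so it is determined by an involution of $S^1$; up to conjugacy the only non-trivial possibilities are the reflection and the antipodal map, whose fixed point sets are $M\sqcup M$ and $\emptyset$ respectively. As the fixed point set is a diffeomorphism invariant of the conjugacy class, every non-trivial product involution on $M\times S^1$ has fixed point set diffeomorphic to $M\sqcup M$ or empty. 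Hence it suffices to produce an involution whose fixed point set is non-empty and not diffeomorphic to $M\sqcup M$.

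As before, take a non-simply connected homology $m$\nobreakdash-sphere $\Sigma$ bounding a contractible $(m+1)$\nobreakdash-manifold $X$ (Kervaire \cite[Theorem 3]{Kervaire1969}), and let $V$ be the $1$\nobreakdash-dimensional sign representation of $\mathbb{Z}_2$. Equip $X\times D(V)$ with the involution that is trivial on $X$ and acts by $-1$ on $D(V)$. Its boundary is the double $X\cup_\Sigma X$, which is a simply connected homology sphere by the Seifert--van Kampen theorem and Mayer--Vietoris, hence a homotopy $(m+1)$\nobreakdash-sphere; since $\dim\big(X\times D(V)\big)=m+2\geqslant 6$, the $h$\nobreakdash-cobordism theorem identifies $X\times D(V)$ with $D^{m+2}$ and its boundary with the standard $S^{m+1}$. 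The involution swaps the two copies of $X$ while fixing $\Sigma$, so on $S^{m+1}$ one has $(S^{m+1})^{\mathbb{Z}_2}=\Sigma$, with tangential module $\mathbb{R}^m\oplus V$ at each point of $\Sigma$.

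Finally I would write $M\times S^1\cong M\times S(V\oplus\mathbb{R})$ with the involution trivial on $M$ and linear on $S(V\oplus\mathbb{R})$; this is a product action with fixed point set $M\sqcup M$, and at a fixed point $y$ the tangential module is again $\mathbb{R}^m\oplus V$. The matching tangent modules permit an equivariant connected sum along invariant disk neighbourhoods of $x\in\Sigma$ and $y$,
\[ S^{m+1}\,\#\,\big(M\times S(V\oplus\mathbb{R})\big)\cong M\times S^1, \]
under which $\Sigma$ is glued to one of the two copies of $M$, yielding fixed point set $M\sqcup(M\#\Sigma)$. As $m\geqslant 3$, the Seifert--van Kampen theorem gives $\pi_1(M\#\Sigma)\cong\pi_1(M)*\pi_1(\Sigma)$, which is not isomorphic to $\pi_1(M)$ by the Grushko--von Neumann theorem; thus the fixed point set is neither empty nor $M\sqcup M$, and the involution is exotic.

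The step requiring the most care is guaranteeing that the boundary sphere is the \emph{standard} $S^{m+1}$: were it an exotic homotopy sphere, the connected sum need not return $M\times S^1$, and the identification of fixed point sets could fail. This is precisely where the hypothesis $m\geqslant 4$ enters, since it makes $X\times D(V)$ an $h$\nobreakdash-cobordism of dimension at least $6$. The remaining delicate point is the existence of a suitable $\Sigma$ at the bottom of the range $m=4$, which I would handle through the same appeal to Kervaire as in Theorem~\ref{thm:non-product-actions}.
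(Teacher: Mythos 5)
Your proposal is correct and takes essentially the same approach as the paper: the paper's own proof of this theorem consists of exactly your observation---take $V$ to be the sign representation, run the proof of Theorem~\ref{thm:non-product-actions} verbatim, and require $m\geqslant 4$ so that $X\times D(V)$ has dimension at least $6$ for the $h$-cobordism theorem. The details you supply (the boundary being the double $X\cup_\Sigma X$, identification with the standard $S^{m+1}$, the fixed point set $M\sqcup(M\#\Sigma)$, and the Grushko--von Neumann argument) are precisely what that specialization amounts to.
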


\begin{proof}
When $G = \mathbb{Z}_{2}$, one can take $V$ to be the sign representation, and the proof then follows exactly as above, although $M$ has to be at least $4$\nobreakdash-\hspace{0pt}dimensional in order to comply with the assumptions of the $h$-cobordism theorem.
\end{proof}

\begin{remark} We emphasize that the construction above produces non\nobreakdash-\hspace{0pt}conjugate actions for different choices of $\Sigma$. For a plausible explanation of a connection between representations and the existence of exotic actions, see the discussion at the end of the paper.
\end{remark}

The following general strategy for detecting exotic actions can be extracted from the proof of Theorem \ref{thm:non-product-actions}.

\begin{proposition}\label{prop:Detection-non-product}
Let $M$ be a manifold such that the fixed point set $M^G$ is connected for any $G$\nobreakdash-action and let $H$ be a non-trivial closed subgroup of $G$. If $G$ acts on $M\times S^n$, $n \geq 1$, with the $H$-fixed point set
\[\big(M\times S^n\big)^H \cong X\sqcup Y\]
for $X$ not homotopy equivalent to $Y$, then the $G$\nobreakdash-action on $M\times S^n$ is exotic.
\end{proposition}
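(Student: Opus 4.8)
The plan is to argue by contradiction, extracting the mechanism already visible in Theorem~\ref{thm:non-product-actions}: passing to fixed point sets turns a diagonal action on a product into a product of the factor-wise fixed point sets, and such a product is too homogeneous to split into two pieces of different homotopy type. First I would assume that the given action $\phi$ on $M\times S^n$ is a product action, so that by definition there is a $G$-equivariant diffeomorphism $f\colon M\times S^n\to M\times S^n$ conjugating $\phi$ to a diagonal action $\Delta$ built from some $G$-actions $\alpha$ on $M$ and $\beta$ on $S^n$. Since $H\leqslant G$, the map $f$ is also $H$-equivariant, hence it restricts to a diffeomorphism of $H$-fixed point sets. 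The key structural fact is that for the diagonal action the $H$-fixed point set factors,
\[ \big(M\times S^n\big)^H_{\Delta}\;=\;M^H\times\big(S^n\big)^H, \]
where the fixed point sets on the right are taken with respect to the restrictions of $\alpha$ and $\beta$ to $H$.

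Next I would analyse the connected components of this product. Granting that $M^H$ is connected, every component of $M^H\times\big(S^n\big)^H$ has the form $M^H\times C$ with $C$ a component of $\big(S^n\big)^H$, so that any expression of this space as a disjoint union $X\sqcup Y$ of two non-empty open-and-closed pieces is forced to come from a partition of $\big(S^n\big)^H$ into $C_X\sqcup C_Y$, giving $X\cong M^H\times C_X$ and $Y\cong M^H\times C_Y$. Since for the relevant sphere actions $\big(S^n\big)^H$ is a single sphere or a pair of points, its components all share one homotopy type, and therefore $X\simeq Y$. Transporting this conclusion back along $f$ contradicts the hypothesis that $\big(M\times S^n\big)^H\cong X\sqcup Y$ with $X\not\simeq Y$; hence no such $f$ exists and the action is exotic.

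I expect the crux of the argument to be exactly the homogeneity claim for the fixed point set of the diagonal action, namely that all of its components are homotopy equivalent. This rests on two inputs: that $M^H$ is connected, and that $\big(S^n\big)^H$ has homotopy-equivalent components. The first is immediate in the situation of Theorem~\ref{thm:non-product-actions}, where $M$ is asymmetric and hence $M^H=M$ for every subgroup, but for a general $M$ whose fixed point sets are only assumed connected it needs to be justified by applying the connectedness hypothesis to the restricted action. The second requires knowing that the sphere factor of a product action carries a sphere (or $S^0$) as its fixed set; establishing, or suitably restricting to, this case is where the real work lies, whereas once both homogeneity statements are available the component bookkeeping is routine.
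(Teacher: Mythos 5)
Your overall strategy coincides with the paper's: assume the action is a product one, restrict the conjugating equivariant diffeomorphism to $H$-fixed point sets, factor the fixed point set of the diagonal action as $M^H\times (S^n)^H$, and derive a contradiction from the fact that all components of this product share a single homotopy type. The genuine gap sits exactly at the step you yourself flag as ``where the real work lies'': you never establish anything about $(S^n)^H$, and the placeholder you assert --- that for the relevant sphere actions $(S^n)^H$ is ``a single sphere or a pair of points'' --- is false, and there is no way to ``suitably restrict'' so as to make it true. A product action is by definition conjugate to a diagonal action built from an \emph{arbitrary} smooth $G$-action on $S^n$; only in low dimensions (such as $n=2$, which is what Theorem~\ref{thm:non-product-actions} exploits) are all such actions linear. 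Indeed, the proof of Theorem~\ref{thm:non-product-actions} in this very paper constructs a smooth $G$-action on a genuine sphere $S^{m+2}$ whose fixed point set is a non-simply-connected homology sphere $\Sigma$ --- a direct counterexample to your claim. As written, the crucial homogeneity statement for $(S^n)^H$ is unsupported, so the argument does not close.

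The missing ingredient, which is the heart of the paper's proof, is Smith theory: the fixed point set of any smooth $\mathbb{Z}_p$-action on $S^n$ is a mod $p$ homology sphere, and that of any $S^1$-action is an integral homology sphere. This weaker conclusion still suffices: such a fixed set $\Sigma$ is either connected or is a homology $S^0$ consisting of two pieces, so $M^H\times\Sigma$ is either connected or homotopy equivalent to $M^H\sqcup M^H$, and in neither case can it split as $X\sqcup Y$ with $X\not\simeq Y$. Two further points your sketch skips are also addressed in the paper. First, when $G=S^1$ the subgroup $H$ may be cyclic of composite order, where mod $p$ Smith theory does not apply directly; the paper handles this by using normality of $H$ in $S^1$ and passing to the induced action of $S^1/H\cong S^1$. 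Second, the connectedness hypothesis is stated for $G$-actions only, so invoking it for the restricted $H$-action on $M$ (to conclude that $M^H$ is connected) is not literally licensed; the paper's reduction is arranged so that only $M^G$ and $G$-fixed point sets enter the final computation.
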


\begin{proof}
Since all subgroups $H \subseteq S^1$ are normal, the argument for $G = S^1$ may be restricted to actions of $S^1\!/H\cong S^1$, and those clearly have non-empty fixed point sets.

By Smith Theory, the fixed point set of any $G$\nobreakdash-action on $S^n$ is either a $\textnormal{mod}\,p$ homology sphere (when $G=\mathbb{Z}_{p}$), or an integral homology sphere (when $G=S^1$), say $\Sigma$. Therefore the fixed point set of a product $G$-action on $M \times S^n$ is equal to $M^G \times\Sigma$. This space is either connected or homotopy equivalent to $M^G \sqcup M^G$, and the conclusion follows.
\end{proof}

\begin{remark}
The results of this section carry over to topological and piecewise\hspace{0pt}-\hspace{0pt}linear categories.
\end{remark}

\section{Free actions on $M \times S^1$}

Throughout this section $M \in \mathcal{M}_{As}$.

\subsection{Free circle actions on $M\times S^1$}\label{sec:Actions_on_MxS^1}

\begin{proposition}\label{prop:product-actions}
Every free $S^1$\nobreakdash-action on $M\times S^1$ such that $\pi_1\big((M\times S^1)/S^1\big)$ is not the infinite cyclic group is a product action.
\end{proposition}

Note that the product $S^1$\nobreakdash-action on $M\times S^1$ in the statement of the above theorem is an action conjugate to the one given by
\[g(x,z)\mapsto (x,g\cdot z) \textnormal{ for any $g$, $z \in S^1$ and $x\in M$}, \]
where $\cdot$ is the complex multiplication.

\begin{proof}
A free $S^1$-action on $M \times S^1$ yields a fiber bundle 
\[\xi = \big(S^1 \stackrel{j_{\xi}}{\longrightarrow} M\times S^1 \stackrel{\pi_{\xi}}{\longrightarrow} X\big) \]
over the orbit space $X = (M \times S^1)/S^1$. The isomorphism class of $\xi$ is determined by the classifying map $f_{\xi} \colon X \to BS^1$. In particular, if $C$ denotes the preferred generator of $H^2(BS^1;\mathbb{Z})$, then $f_{\xi}$ is homotopic to a constant map if and only if the first Chern class $c_1(\xi) = f_{\xi}^*C$ is trivial. 

Temporarily assume that $c_1(\xi) =0$. Then $\xi$ is isomorphic to the product bundle $\varepsilon = (S^1 \stackrel{j_{\varepsilon}}{\longrightarrow} X \times S^1 \stackrel{\pi_{\varepsilon}}{\longrightarrow} X)$, and there is a commutative diagram
\begin{equation*}\label{eqn:diagram}
\begin{tikzpicture}[scale=1]
\draw (0,2) node(MxS1) {$M\times S^1$};
\draw (0,0) node(MxS12) {$X\times S^1$};
\draw (-3,1) node(S1) {$S^1$};
\draw (3,1) node(X) {$X$,};

\path [->] (S1) edge node [above] {$j_{\xi}$} (MxS1);
\path [->] (S1) edge node [below] {$j_{\varepsilon}$} (MxS12);

\path [->] (MxS1) edge node [above] {$\pi_{\xi}$} (X);
\path [->] (MxS12) edge node [below] {$\pi_{\varepsilon}$} (X);

\path [->] (MxS1) edge node [left]{$\psi$} (MxS12);

% \draw (3.5,1) node [right=2pt, scale=2] {$\circlearrowleft$};
% \draw (4,1.0) node (h) {\footnotesize $h$};
% \draw [->] (3.8,1.2) arc(130: 410: .27cm) node [below=0.21cm, left=-0.065cm]{\footnotesize $h$};
\end{tikzpicture}
\end{equation*}
with $\psi \colon M \times S^1 \to X \times S^1$ a diffeomorphism. Since $\pi_1(M) \times \pi_1(S^1) \cong \pi_1(X) \times \pi_1(S^1)$ and $\pi_1(M)=0$, it follows that $X$ is  simply connected. In addition, the composition $\pi_{\varepsilon} \circ \psi \circ i \colon M \to X$, where $i \colon M \to M\times S^1$ is the canonical inclusion, induces an isomorphism $\pi_k(M) \to \pi_k(X)$ for any $k\geq 2$, and therefore the map $M \to X$ is a homotopy equivalence. This reduces the proof of Proposition \ref{prop:product-actions} to the following two claims.\medskip

\begin{description}
 \item [Claim 1] \textit{The first Chern class $c_1(\xi)$ is trivial.}
 \item [Claim 2] \textit{The equivalence $M\to X$ can be improved to a diffeomorphism.}
\end{description}\medskip

\noindent\emph{Proof of Claim 1.}
Recall that $M$ is a simply connected $6$\nobreakdash-dimensional manifold satisfying $H^2(M)\cong \mathbb{Z}^6$ and $H^3(M) = 0$. (For the reminder of the proof we suppress integer coefficients from notation.) Inspection of the long exact sequence of homotopy groups of $\xi$ shows that $\pi_1(X) \cong \mathbb{Z}_m$, $1 \leq m < \infty$, as the infinite cyclic possibility is excluded by assumption. Furthermore, $\xi$ is orientable: indeed, since $\pi_1(BS^1) = 0$, the classifying bundle $ES^1 \to BS^1$ is orientable, and any pullback of an orientable bundle is again orientable. Hence we can consider the Gysin sequence
\[ \cdots \to H^i(X) \stackrel{\rho_i}{\longrightarrow} H^{i+2}(X) \to H^{i+2}(M\times S^1) \to H^{i+1}(X) \to \cdots, \]
where each $\rho_i$ is given by multiplication with the Euler class $e \in H^2(X)$, which in this case coincides with the Chern class $c_1(\xi)$.
From the part 
\[ 0 = H^7(X) \to H^7(M \times S^1) \to H^6(X) \to H^8(X) = 0 \]
we infer that $H^6(X)\cong H^7(M \times S^1) \cong \mathbb{Z}$, and therefore $X$ is orientable. By Poincar\'e duality, $H^5(X) \cong H_1(X) \cong \mathbb{Z}_m$. The part
\[ 0 = H^1(X) \to H^1(M\times S^1) \to H^0(X) \to \textrm{Im}\,\rho_0 \to 0\]
shows that $e$ is a torsion element and $\textrm{Im}\,\rho_i$ is a torsion group for any $i \geq 0$. The part
\[ 0 \to \textrm{Im}\,\rho_0 \to H^2(X) \to H^2(M \times S^1) \to H^1(X) = 0 \]
shows that 
\begin{equation}\label{eq:H2}
H^2(X) \cong \mathbb{Z}^6 \oplus \textrm{Im}\,\rho_0 \cong \mathbb{Z}^6 \oplus \langle e\rangle,\tag{$\diamondsuit$}
\end{equation}
and therefore $\langle e\rangle \cong \textrm{Tor}\big(H_1(X)\big) = \mathbb{Z}_m$. The part
\[ \mathbb{Z}_m = H^5(X) \to H^5(M\times S^1) \to H^4(X)\to \textrm{Im}\,\rho_4 = 0 \]
implies that $\textrm{Im}\big[H^5(X) \to H^5(M\times S^1)\big] = 0$ and $H^4(X)\cong H^5(M \times S^1) \cong \mathbb{Z}^6$. Again by Poincar\'e duality, $H_2(X) \cong H^4(X) \cong \mathbb{Z}^6$ and $H^3(X) \cong H_3(X)$, and $H_3(X)$ is a free abelian group by the universal coefficients theorem. However, the part
\[ 0 = \textrm{Im}\,\rho_2 \to H^4(X) \to H^4(M \times S^1) \to H^3(X) \to \textrm{Im}\,\rho_3 \to 0 \]
tells us that $H^3(X)$ is a torsion group. Consequently, $H^3(X)=0$. Finally, the part
\[ 0 = H^3(X) \to H^3(M\times S^1) \to H^2(X) \to \textrm{Im}\,\rho_2 =0 \]
shows that $H^2(X) \cong \mathbb{Z}^6$. Comparing this with \eqref{eq:H2}, we conclude that $c_1(\xi) = e = 0$.\medskip

\noindent\emph{Proof of Claim 2.} Thanks to equivalence of bundles, we already have a diffeomorphism $M\times S^1 \to X\times S^1$. Lift it to a map of universal covers $\varphi\colon M \times \mathbb{R} \to X \times \mathbb{R}$. Since $\varphi\big(M \times \{0\}\big)$ is compact, it is contained in $X \times (-a, a)$ for some $a>0$. Let
\[ W=\varphi\big(M\times[0,\infty)\big)\cap \big(X\times (-\infty, a]\big).\] 
Then $\partial W = \big(X\times \{a\}\big)\sqcup \varphi\big(M\times\{0\}\big)$, hence $W$ is a cobordism between $X$ and~$M$. The inclusions $X\hookrightarrow W$ and $\varphi(M)\hookrightarrow W$ are homotopy equivalences. Given that $\pi_1(M)=0$, the $h$-cobordism theorem yields a diffeomorphism $M\to X$.
\end{proof}

\begin{remark}
We want to thank the referee for supplying us with the above version of the proof of Claim 1 in Proposition \ref{prop:product-actions} and pointing out that our earlier approach also did not cover the case $\pi_1\big((M\times S^1)/S^1\big) \cong \mathbb{Z}$. However, we still believe that it is true that \textit{every} free circle on $M\times S^1$ is a product one.
\end{remark}

\subsection{Detecting non-product free actions on $M \times S^1$}

Recall that if $G$ is a finite group which acts freely on a manifold $X$, then the orbit space $X/G$ can be endowed with a unique smooth structure such that the projection $X \to X/G$ is a smooth covering. Whenever we consider such an orbit space, it is this smooth structure that we have in mind.

The following theorem provides a criterion for deciding whether a free orientation preserving $\mathbb{Z}_p$-action on $M \times S^1$ is a product one or not:

\begin{theorem}\label{thm:detecting_non-product_actions}
A free orientation preserving $\mathbb{Z}_p$-action on $M\times S^1$ is a product action if and only if its orbit space is diffeomorphic to $M\times S^1$.
\end{theorem}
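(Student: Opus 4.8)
The plan is to treat the two implications separately, obtaining the substantive ``if'' direction from elementary covering space theory once the hypothesis pins down the fundamental group of the orbit space.

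For the forward (``only if'') implication I would argue directly. Suppose the action is a free, orientation preserving product action. Freeness forces the $\mathbb{Z}_p$\nobreakdash-factor acting on $S^1$ to act freely, hence by rotation; such a rotation preserves the orientation of $S^1$, so orientation preservation on the product forces the $\mathbb{Z}_p$\nobreakdash-action on the factor $M$ to preserve orientation as well. By Puppe's theorem that action on $M$ must then be trivial. Consequently the orbit space is $M\times(S^1/\mathbb{Z}_p)\cong M\times S^1$, as required.

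For the reverse implication, write $q\colon M\times S^1\to N$ for the orbit map of the given free action; since $\mathbb{Z}_p$ acts freely this is a regular $p$\nobreakdash-fold covering with deck group $\mathbb{Z}_p$. The hypothesis $N\cong M\times S^1$, together with $\pi_1(M)=0$, gives $\pi_1(N)\cong\mathbb{Z}$, so the image $q_*\pi_1(M\times S^1)$ is the unique index\nobreakdash-$p$ subgroup $p\mathbb{Z}\subset\mathbb{Z}$. Fixing a diffeomorphism $\psi\colon N\to M\times S^1$ and setting $f=\mathrm{pr}_{S^1}\circ\psi\colon N\to S^1$ (which induces an isomorphism on $\pi_1$), I would identify $q$ with the pullback along $f$ of the connected $p$\nobreakdash-fold cover $c\colon S^1\to S^1$, $c(w)=w^p$: both coverings of $N$ correspond to the index\nobreakdash-$p$ subgroup, hence are isomorphic as smooth coverings. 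The pullback $f^{*}S^1=\{(n,w)\in N\times S^1 : f(n)=w^p\}$ carries the tautological free $\mathbb{Z}_p$\nobreakdash-action $(n,w)\mapsto(n,\zeta w)$ with $\zeta=e^{2\pi i/p}$, and the assignment $(n,w)\mapsto\big(\mathrm{pr}_M(\psi(n)),w\big)$ is a $\mathbb{Z}_p$\nobreakdash-equivariant diffeomorphism $f^{*}S^1\xrightarrow{\ \cong\ }M\times S^1$ onto the standard product action. Composing with the covering isomorphism $M\times S^1\to f^{*}S^1$ then exhibits the given action as a product action.

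The main obstacle is bookkeeping rather than geometry: one must carry the whole argument through in the smooth category, checking that $f^{*}S^1$ is a smooth manifold and that the covering isomorphism is a diffeomorphism and not merely a homeomorphism (this is where the smooth structure on the orbit space, declared before the theorem, is used). A second point to address is the matching of deck generators: the covering isomorphism may intertwine the prescribed generator of $\mathbb{Z}_p$ with the rotation $w\mapsto\zeta^{k}w$ for some $k$ coprime to $p$, so I would note that rotation by an arbitrary primitive $p$\nobreakdash-th root of unity is again an effective free $\mathbb{Z}_p$\nobreakdash-action on $S^1$, whence the resulting action is still diagonal and the conclusion is unaffected.
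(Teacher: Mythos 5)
Your reverse (``if'') implication is correct and, modulo packaging, is the paper's own argument: both proofs come down to the fact that the orbit space has infinite cyclic fundamental group, that $\mathbb{Z}$ has a unique subgroup of index $p$, and that the classification of covering spaces therefore identifies the orbit map with the standard covering $M\times S^1\to M\times S^1$, $(x,z)\mapsto (x,z^p)$, by an isomorphism of smooth coverings which conjugates the deck generator to some rotation $z\mapsto e^{2\pi i k/p}z$ with $k$ coprime to $p$. The paper obtains this isomorphism by lifting the given diffeomorphism of orbit spaces and counting lifts; you obtain it by pulling back $w\mapsto w^p$ along $\mathrm{pr}_{S^1}\circ\psi$. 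These are the same mechanism, and your handling of smoothness and of the generator ambiguity is fine. (You also correctly observe that the hypothesis supplies $\pi_1(N)\cong\mathbb{Z}$ directly, so this direction does not actually require the paper's Corollary \ref{cor:pi1_orbit}.)

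The genuine gap is in your forward (``only if'') implication, in its very first step: ``freeness forces the $\mathbb{Z}_p$-factor acting on $S^1$ to act freely.'' This is not a valid inference. A diagonal action is free if and only if for every $g\neq e$ either $M^g=\emptyset$ or $(S^1)^g=\emptyset$; so if $\mathbb{Z}_p$ acted \emph{freely on the factor $M$}, the action on $S^1$ could perfectly well have fixed points. For odd $p$ the scenario is excluded, but only by invoking Puppe's theorem \emph{before} this step rather than after it (no non-trivial $\mathbb{Z}_p$-action on $M$ exists at all, whatever it does to orientation), so there your argument is merely circularly ordered and easily repaired. For $p=2$ the problem is real: Puppe's theorem permits an orientation-reversing involution $\sigma$ on $M$, and if some such $\sigma$ were free, then $\sigma\times(\text{reflection})$ would be a free, orientation-preserving \emph{diagonal} action on $M\times S^1$ whose orbit space has fundamental group $\mathbb{Z}\rtimes\mathbb{Z}_2$ (infinite dihedral), hence is not diffeomorphic to $M\times S^1$ --- i.e. the forward implication itself would be false. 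Soft arguments do not exclude such a $\sigma$: by the paper's description of the unique orientation-reversing action on $H^*(M;\mathbb{Z})$, its Lefschetz number is $1-6+6-1=0$. Ruling it out is precisely what the paper's Lemma \ref{universal_cover} accomplishes (a free involution on $M$ would yield a free involution on $M\times\mathbb{R}$), and that proof is not formal: it uses triviality of the induced action on $H^*(M;\mathbb{Z}_2)$, Puppe's result that this ring admits no derivations of negative degree, and a spectral-sequence/cohomological-dimension argument. Your proposal contains no substitute for this ingredient, so the ``only if'' direction is incomplete as written.
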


\noindent We will now work towards showing that the fundamental group of the orbit space of a free $\mathbb{Z}_p$-action on $M \times S^1$ is infinite cyclic. Once we have that, Theorem \ref{thm:detecting_non-product_actions} will follow from an argument similar to the one of Jahren and Kwasik \cite[\mbox{Section 4}]{Jahren2010}. (See also Khan \cite{Khan2016}.)

\begin{lemma}\label{universal_cover}
Let $p$ be a prime. There is no free $\mathbb{Z}_{p}$-action on $M \times \mathbb{R}$.
\end{lemma}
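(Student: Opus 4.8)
The plan is to reduce the statement to the behaviour of the generator $g$ of $\mathbb{Z}_{p}$ on $H^2(M;\mathbb{Z})\cong\mathbb{Z}^6$, and then to separate the argument according to whether $g$ preserves or reverses orientation. First I would record the structural setup. Suppose, for contradiction, that $\mathbb{Z}_{p}$ acts freely on $M\times\mathbb{R}$. Since $M$ is simply connected and $\mathbb{R}$ is contractible, $M\times\mathbb{R}$ is simply connected, so it is the universal cover of the orbit space $Q\define(M\times\mathbb{R})/\mathbb{Z}_{p}$, an open $7$-manifold with $\pi_1(Q)\cong\mathbb{Z}_{p}$. The diffeomorphism $g$ induces a ring automorphism of $H^*(M\times\mathbb{R};\mathbb{Z})\cong H^*(M;\mathbb{Z})$ of order dividing $p$; in particular $g^*$ restricts to a finite-order automorphism of $H^2(M;\mathbb{Z})$ carrying the cup-product (cubic) form $\mu$ to $\pm\mu$, the sign being the value $\varepsilon=\pm1$ by which $g$ acts on $H^6(M;\mathbb{Z})\cong\mathbb{Z}$. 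By the very property that places $M$ in $\mathcal{M}_{As}$ (Puppe's construction), the only automorphisms of $H^2(M;\mathbb{Z})$ carrying $\mu$ to $\pm\mu$ are $\pm\mathrm{id}$; hence $g^*|_{H^2}=\pm\mathrm{id}$.

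The orientation-preserving case is then immediate and disposes of all odd primes. If $g^*|_{H^2}=+\mathrm{id}$ then, because $H^*(M;\mathbb{Q})$ is generated in degree $2$, the whole action on $H^*(M\times\mathbb{R};\mathbb{Q})$ is trivial. Consequently $H^*(Q;\mathbb{Q})\cong H^*(M\times\mathbb{R};\mathbb{Q})^{\mathbb{Z}_{p}}=H^*(M;\mathbb{Q})$, so $\chi(Q)=\chi(M)=1+6+6+1=14$. On the other hand, $M\times\mathbb{R}\to Q$ is a $p$-fold covering between spaces with finite-dimensional rational cohomology, so multiplicativity of the Euler characteristic forces $\chi(M\times\mathbb{R})=p\,\chi(Q)$, i.e. $14=14p$ --- a contradiction. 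Since $-\mathrm{id}$ has order $2$, for every odd $p$ we must have $g^*|_{H^2}=+\mathrm{id}$, and the lemma follows in that range. The only surviving possibility is $p=2$ with $g=\tau$ acting by $-\mathrm{id}$ on $H^2$ (hence by $+\mathrm{id}$ on $H^4$ and $-\mathrm{id}$ on $H^6$).

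This last case is the main obstacle, and I expect it to be genuinely delicate rather than formal: here $\chi(Q)=7$ is perfectly consistent with the transfer, and the Lefschetz-type obstruction vanishes identically --- precisely the shadow of the orientation-reversing involution that Puppe's theorem is forced to allow. My plan is to argue geometrically through the two ends of $M\times\mathbb{R}$. Because $\tau$ acts by $-1$ on $H^6(M)$, its orientation behaviour on $M\times\mathbb{R}$ is controlled by whether it preserves or interchanges the ends; in either situation one produces a proper $\tau$-(anti)invariant function $M\times\mathbb{R}\to\mathbb{R}$ whose regular level set $N$ is a $\tau$-invariant closed hypersurface. The free involution $\tau$ then restricts to a free involution on the closed $6$-manifold $N$.

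The crux is to identify $N$ tightly enough with $M$ to reach a contradiction with the (almost) asymmetry of $M$: one must show, using simple-connectivity and the $h$-cobordism theorem in the spirit of the proof of Theorem \ref{thm:product-actions}, that $N$ would carry a free involution that an almost asymmetric manifold cannot support (the only admissible involution being the orientation-reversing one of Olbermann, which has a nonempty $3$-dimensional fixed-point set). I would finally remark that this residual case is inessential for the applications: in Theorem \ref{thm:detecting_non-product_actions} the ambient $\mathbb{Z}_{p}$-action on $M\times S^1$ preserves orientation, and the deck transformations of $M\times\mathbb{R}\to M\times S^1$ do as well, so the torsion to be excluded acts by orientation-preserving diffeomorphisms of $M\times\mathbb{R}$ --- exactly the clean case settled above.
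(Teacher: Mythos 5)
Your proposal takes a genuinely different route from the paper (which runs the Leray--Serre spectral sequence of $M\times\mathbb{R}\to (M\times\mathbb{R})/\mathbb{Z}_p\to K(\mathbb{Z}_p,1)$, shows it collapses, and contradicts the finite cohomological dimension of the $7$-dimensional orbit space), but both of your cases contain genuine gaps. In the orientation-preserving case, the step ``multiplicativity of the Euler characteristic forces $\chi(M\times\mathbb{R})=p\,\chi(Q)$'' is not a standard fact for \emph{non-compact} spaces, and it is exactly where the content of the lemma lives. Multiplicativity for free actions is usually deduced from the Lefschetz fixed point theorem ($L(g)=0$ for $g\neq e$), which requires compactness and fails for proper fixed-point-free maps of open manifolds (translation of $\mathbb{R}$ has $L=1$). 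Indeed, by the rational transfer one has $\chi(Q)=\tfrac{1}{p}\sum_{g}L(g)$, so in the cohomologically trivial case your multiplicativity claim is \emph{equivalent} to $\chi(M)=0$ --- i.e.\ equivalent to the contradiction you are trying to derive; the step begs the question. Rational information alone cannot suffice here: the rational Cartan--Leray spectral sequence collapses and gives the perfectly consistent answer $H^*(Q;\mathbb{Q})\cong H^*(M;\mathbb{Q})$; the obstruction is torsion. The honest repair is either Floyd's mod~$p$ Euler characteristic formula for finitistic spaces (which needs no cohomological triviality but only yields $p\mid\chi(M)=14$, leaving $p=2,7$ open), or the paper's argument: integrally, all nonzero $E_2$-entries sit in bidegrees with both coordinates even, so all differentials vanish and $H^*(Q;\mathbb{Z})$ is nonzero in arbitrarily high degrees, contradicting $\dim Q=7$.

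The $p=2$, orientation-reversing case is only a sketch, and its crux is unproven and cannot be obtained from the cited results. Even granting that your invariant hypersurface $N$ can be identified with $M$ (itself nontrivial: a level set need not be connected or simply connected), the final contradiction requires that $M$ admits \emph{no free involution}. Puppe's theorem explicitly allows orientation-reversing involutions and says nothing about their fixed sets; Olbermann constructs involutions \emph{with} $3$-dimensional fixed sets but does not show every involution has fixed points; and the Lefschetz number of the orientation-reversing cohomology action is $1-6+6-1=0$, so no fixed point is forced that way. ``$M$ has no free involution'' is in fact a \emph{consequence} of the lemma (restrict $\tau\times\mathrm{id}$ to $M\times\mathbb{R}$) and is of comparable depth --- the paper settles it by passing to $\mathbb{Z}_2$-coefficients, where the action is cohomologically trivial, and invoking Puppe's result that $H^*(M;\mathbb{Z}_2)$ admits no derivations of negative degree to kill all differentials. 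Finally, your closing remark that this case is inessential for the applications is only half true: Theorem~\ref{thm:detecting_non-product_actions} indeed uses only orientation-preserving actions, but Corollary~\ref{cor:pi1_orbit} and the cyclicity corollary concern arbitrary finite groups acting freely on $M\times S^1$, whose torsion deck transformations on $M\times\mathbb{R}$ need not preserve orientation, so the full strength of the lemma is required there.
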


\begin{proof}
Assume, on the contrary, that such an action exists. If it induces the identity on $H^6(M \times \mathbb{R}; \mathbb{Z})$, then it is cohomologically trivial by a result of Puppe \cite[Theorem 4]{Puppe2007}, which states that the ring $H^*(M;\mathbb{Z})$ does not admit any non-trivial orientation preserving automorphisms of finite order. Consequently, the local coefficient system in the Leray--Serre spectral sequence 
\[ E_2^{s,t} \cong H^s\big(B\zmod{p}; H^t(M\times\mathbb{R};\mathbb{Z})\big) \Longrightarrow H^{s+t}\big(E\zmod{p} \times_{\zmod{p}} (M \times \mathbb{R}); \mathbb{Z}\big)
\]
of the Borel fibration $M\times\mathbb{R} \to E\zmod{p} \times_{\zmod{p}} (M \times \mathbb{R}) \to B\zmod{p}$ is trivial. Recall that cohomology groups $H^*(M; \mathbb{Z})$ are trivial in odd dimensions, so that the groups $E_2^{s,t}$ vanish for odd $t$. Furthermore, since $H^*(M;\mathbb{Z})$ are finitely-generated free abelian in even dimensions and
\[ H^k(B\mathbb{Z}_p, \mathbb{Z})=\begin{cases}
\mathbb{Z}, & k=0,\\
0, & \text{$k$ is odd},\\
\mathbb{Z}_p, & \textnormal{$k > 0$ is even},
\end{cases}\]
it follows from the universal coefficients theorem that the groups $E_2^{s,t}$ vanish also for odd $s$. Consequently, the spectral sequence collapses at the $E_2$-page. This implies that the cohomological dimension of $E\zmod{p} \times_{\zmod{p}} (M \times \mathbb{R}) \simeq (M\times\mathbb{R})/\mathbb{Z}_p$ is infinite, which clearly is impossible.

It remains to deal with the cohomologically non-trivial case for $p=2$. The ring $H^*(M;\mathbb{Z})$ has a unique orientation reversing involution, given by the identity on $H^0$ and $H^4$, and by multiplication by $(-1)$ on $H^2$ and~$H^6$. In particular, the action induced on $H^*(M;\mathbb{Z}_2)$ is necessarily trivial. In view of Puppe \cite[Lemma 1]{Puppe2007}, $H^*(M;\mathbb{Z}_{2})$ does not admit derivations of negative degree. Since differentials on the $E_2$-page of the aforementioned spectral sequence considered with $\textrm{mod}\,2$ coefficients give rise to such derivations, all of the former have to vanish. This again implies that the cohomological dimension of $(M\times\mathbb{R})/\mathbb{Z}_p$ is infinite.
\end{proof}

\begin{corollary}\label{cor:pi1_orbit}
Suppose that a finite group $G$ acts freely on $M \times S^1$. Then \mbox{$\pi_1\big((M\times S^1)/G\big) \cong \mathbb{Z}$}.
\end{corollary}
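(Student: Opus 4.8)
The plan is to identify $\Gamma := \pi_1\big((M\times S^1)/G\big)$ with a group of deck transformations of the universal cover $M\times\mathbb{R}$ and to rule out torsion in $\Gamma$ using Lemma~\ref{universal_cover}. If $G$ is trivial there is nothing to prove, since $\pi_1(M\times S^1)\cong\mathbb{Z}$; so assume $G\neq 1$. Because the $G$-action is free and $G$ is finite, the projection $M\times S^1\to (M\times S^1)/G =: Q$ is a regular covering with deck group $G$, and since $M$ is simply connected, $M\times\mathbb{R}$ is the universal cover of both $M\times S^1$ and $Q$. The covering $M\times S^1\to Q$ then yields a short exact sequence
\[ 1\longrightarrow \pi_1(M\times S^1)\longrightarrow \Gamma\longrightarrow G\longrightarrow 1, \]
that is, $1\to\mathbb{Z}\to\Gamma\to G\to1$. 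In particular $\Gamma$ is finitely generated and contains an infinite cyclic subgroup of index $|G|$.

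The heart of the argument, and the step where Lemma~\ref{universal_cover} enters, is to show that $\Gamma$ is torsion-free. First I would observe that $\Gamma$ acts freely on $M\times\mathbb{R}$ by deck transformations. If $\Gamma$ had a non-trivial element of finite order $n$, then raising it to the power $n/p$ for a prime $p\mid n$ would produce a subgroup isomorphic to $\mathbb{Z}_p$ acting freely on $M\times\mathbb{R}$, contradicting Lemma~\ref{universal_cover}. Hence every non-trivial element of $\Gamma$ has infinite order. I expect this to be the main conceptual move: translating torsion in the fundamental group of the orbit space into a genuine free $\mathbb{Z}_p$-action on the universal cover, which is exactly what the lemma forbids.

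It then remains to deduce that a finitely generated torsion-free group containing $\mathbb{Z}$ as a finite-index subgroup is itself infinite cyclic. This is a standard fact: such a group has exactly two ends, and by the Stallings--Wall structure theorem a two-ended group has a finite normal subgroup with quotient $\mathbb{Z}$ or the infinite dihedral group $D_\infty$; torsion-freeness kills the finite subgroup and excludes $D_\infty$, leaving $\Gamma\cong\mathbb{Z}$. Alternatively, one may replace the given copy of $\mathbb{Z}$ by its normal core $N\trianglelefteq\Gamma$, pass to the index-$\leqslant 2$ subgroup $\Gamma^{+}$ centralizing $N$, apply Schur's theorem (a group with finite-index centre has finite commutator subgroup, hence trivial here by torsion-freeness) to conclude $\Gamma^{+}$ is abelian and therefore $\cong\mathbb{Z}$, and finally check directly that the only torsion-free group fitting into $1\to\mathbb{Z}\to\Gamma\to\mathbb{Z}_2\to1$ is again $\mathbb{Z}$. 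Either route gives $\Gamma\cong\mathbb{Z}$, completing the proof.
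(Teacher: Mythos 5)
Your proof is correct and follows essentially the same route as the paper: the covering-space short exact sequence $0 \to \mathbb{Z} \to \Gamma \to G \to 1$, torsion-freeness of $\Gamma$ obtained from its free deck action on $M \times \mathbb{R}$ together with Lemma~\ref{universal_cover}, and finally the fact that the only torsion-free virtually cyclic group is $\mathbb{Z}$. The sole difference is that the paper cites Scott and Wall for this last group-theoretic fact, whereas you supply (correct) proofs of it via ends theory or the normal-core/Schur argument.
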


\begin{proof} Denote by $X$ the orbit space $(M \times S^1)/G$. Since $\pi_1(M\times S^1) \cong \mathbb{Z}$, the long exact sequence of homotopy groups of the cover $M \times S^1 \to X$ gives rise to a short exact sequence
\[ 0 \to \mathbb{Z} \to \pi_1(X) \to G \to 1, \]
i.e. $\pi_1(X)$ is a virtually cyclic group. Since $X$ is universally covered by $M \times \mathbb{R}$, the group $\pi_1(X)$ acts on $M\times\mathbb{R}$ by deck transformations. In view of Lemma~\ref{universal_cover}, $\pi_1(X)$ is torsion free. But the only torsion free virtually cyclic group is the infinite cyclic group (see Scott and Wall \cite[Theorem 5.12]{Scott1979}), hence $\pi_1(X)\cong \mathbb{Z}$.
\end{proof}

As a by-product, we obtain:

\begin{corollary}
If a finite group $G$ acts freely on $M \times S^1$, then $G$ is cyclic.
\end{corollary}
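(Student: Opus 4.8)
The plan is to read the conclusion directly off the machinery already assembled for Corollary~\ref{cor:pi1_orbit}, so I would not re-derive anything. Writing $X$ for the orbit space $(M\times S^1)/G$, the long exact sequence of homotopy groups of the covering $M\times S^1\to X$ has already produced the short exact sequence
\[ 0 \to \mathbb{Z} \to \pi_1(X) \to G \to 1, \]
which I would simply recall. The only additional ingredient needed is that $\pi_1(X)$ is itself cyclic, and this is precisely the content of Corollary~\ref{cor:pi1_orbit}, namely $\pi_1(X)\cong\mathbb{Z}$.

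With both facts in hand the argument closes in one step. The sequence above exhibits $G$ as the quotient of $\pi_1(X)$ by the image of $\mathbb{Z}$; since $\pi_1(X)\cong\mathbb{Z}$ is infinite cyclic, that image is a subgroup $n\mathbb{Z}$ for some $n\geqslant 1$, and consequently $G\cong\mathbb{Z}_{n}$. As every quotient of the infinite cyclic group is cyclic, I conclude that $G$ is cyclic. The assumption that $G$ is finite forces $n=|G|$ to be finite, which is automatic here anyway, as any nonzero subgroup of $\mathbb{Z}$ has finite index.

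I would flag that there is no genuine obstacle at this stage: all the substantive work — ruling out free $\mathbb{Z}_p$-actions on $M\times\mathbb{R}$ in Lemma~\ref{universal_cover}, and promoting this to the identification $\pi_1(X)\cong\mathbb{Z}$ in Corollary~\ref{cor:pi1_orbit} — has already been carried out. The present statement is a purely formal consequence, which is exactly why it is phrased as a by-product; the proof amounts to observing that a quotient of $\mathbb{Z}$ can only be cyclic.
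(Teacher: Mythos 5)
Your proof is correct and is precisely the argument the paper intends: the corollary is stated as a ``by-product'' of Corollary~\ref{cor:pi1_orbit}, and the implicit reasoning is exactly yours --- $G$ is the quotient of $\pi_1(X)\cong\mathbb{Z}$ by the (injectively embedded) image of $\pi_1(M\times S^1)\cong\mathbb{Z}$, and every quotient of $\mathbb{Z}$ is cyclic. Nothing is missing.
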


\begin{proof}[Proof of Theorem \ref{thm:detecting_non-product_actions}] Clearly, conjugate actions have diffeomorphic orbit spa\-ces, and the orbit space of a diagonal action on $M \times S^1$ which is free and orientation preserving is again $M \times S^1$.

Conversely, let $\phi$ be a $p$-periodic, fixed point free and orientation preserving self-map of $M\times S^1$, i.e. $\phi$ induces a free orientation preserving $\mathbb{Z}_p$-action on $M\times S^1$. Denote by $\Delta$ the self-map of $M\times S^1$ which is trivial on $M$ and given by the rotation by $e^{2\pi i/p}$ on $S^1$. Write $\pi_\Delta$ for the orbit projection corresponding to $\Delta$.

Suppose that $f\colon (M\times S^1) /\phi \to (M \times S^1) /\Delta$ is a diffeomorphism. Since the fundamental group of $(M \times S^1)/\phi$ is infinite cyclic by Corollary~\ref{cor:pi1_orbit}, after choosing basepoints there exists a lift $F\colon M \times S^1 \to M \times S^1$ of $f\circ \pi_{\Delta}$. Then $\phi\circ F$ and $F\circ\Delta^k$, where $1 \leq k \leq  p-1$, are also lifts of $f$, all different from~$F$. But there are only $p$ different lifts in total, hence $\phi \circ F=F \circ \Delta^k$ for some $k$, and the action induced by $\phi$ is conjugate to a product one.
\end{proof}

Note that for $p=2$ the argument above in fact proves a slightly stronger statement:

\begin{corollary}
Free orientation preserving involutions on $M \times S^1$ are conjugate if and only if the respective orbit spaces are diffeomorphic.
\end{corollary}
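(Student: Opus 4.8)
The plan is to observe that the proof of Theorem~\ref{thm:detecting_non-product_actions} never uses any special feature of the diagonal involution $\Delta$ beyond the fact that it is free, orientation preserving, and has an orbit space with infinite cyclic fundamental group; by Corollary~\ref{cor:pi1_orbit} the latter holds for \emph{every} free orientation preserving involution. I would therefore let $\phi_0,\phi_1$ be two such involutions, with orbit projections $q_i\colon M\times S^1\to X_i:=(M\times S^1)/\phi_i$ for $i=0,1$, and rerun the lifting argument verbatim with $\phi_1$ playing the role of $\Delta$. The forward implication needs no change: a diffeomorphism conjugating $\phi_0$ to $\phi_1$ descends to a diffeomorphism $X_0\to X_1$, exactly as in the opening line of that proof.

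For the converse, suppose $f\colon X_0\to X_1$ is a diffeomorphism. By Corollary~\ref{cor:pi1_orbit} both $\pi_1(X_i)$ are infinite cyclic, and each double cover $q_i$ corresponds to the unique index-two subgroup $2\mathbb{Z}\subset\mathbb{Z}$. Since $f_*$ is an automorphism of $\mathbb{Z}$ it preserves this subgroup, so the lifting criterion is met and $f\circ q_0$ admits a lift $F\colon M\times S^1\to M\times S^1$ with $q_1\circ F=f\circ q_0$; lifting $f^{-1}$ as well and composing the two lifts shows that $F$ is a diffeomorphism. The covering $q_1$ is regular with deck group $\{\mathrm{id},\phi_1\}$, so $f\circ q_0$ has exactly two lifts, namely $F$ and $\phi_1\circ F$. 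Because $q_0\circ\phi_0=q_0$, the map $F\circ\phi_0$ is also a lift, and it is distinct from $F$ since $\phi_0\neq\mathrm{id}$ and $F$ is injective. Hence $F\circ\phi_0=\phi_1\circ F$, i.e. $F$ conjugates $\phi_0$ to $\phi_1$.

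The feature genuinely special to $p=2$, and the reason this is stronger than Theorem~\ref{thm:detecting_non-product_actions}, is that the second lift is forced to equal $\phi_1\circ F$: there is no room for a nontrivial exponent. For a general prime the same reasoning only gives $F\circ\phi_0=\phi_1^{\,k}\circ F$ for some $1\le k\le p-1$, which realizes $\phi_0$ as conjugate to a possibly different generator of the $\mathbb{Z}_p$-action and therefore yields equivalence of actions rather than conjugacy of the chosen generators. Since the substantive topological input---that the orbit space has infinite cyclic fundamental group---is already supplied by Corollary~\ref{cor:pi1_orbit}, I expect no real obstacle here; the only point demanding a line of care is the verification that the lift $F$ is an honest diffeomorphism rather than a mere continuous lift, which follows formally by also lifting $f^{-1}$ and noting that the resulting composites are deck transformations.
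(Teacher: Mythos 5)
Your proposal is correct and is essentially the paper's own argument: the paper obtains this corollary precisely by observing that the lifting argument in the proof of Theorem~\ref{thm:detecting_non-product_actions} applies verbatim with an arbitrary free orientation preserving involution $\phi_1$ in place of $\Delta$ (Corollary~\ref{cor:pi1_orbit} supplying $\pi_1\cong\mathbb{Z}$ for any such orbit space), and that for $p=2$ the only possible exponent is $k=1$, forcing an honest conjugation $F\circ\phi_0=\phi_1\circ F$. Your additional verifications---the lifting criterion via the unique index-two subgroup of $\mathbb{Z}$, and that $F$ is a diffeomorphism by lifting $f^{-1}$---are details the paper leaves implicit, handled correctly.
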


\begin{remark}
Theorem \ref{thm:detecting_non-product_actions} remains true in homotopical and topological settings.
\end{remark}

\section{Conclusion}\label{sec:Conjectures}

Let us conclude with a few remarks on the expected behavior of actions on product manifolds. The smallest sphere (in terms of dimension) on which a finite group acts non-freely is the linear sphere $S(V\oplus \mathbb{R})$ for some non-trivial irreducible representation $V$. The proof of Theorem \ref{thm:non-product-actions} suggests that the ``threshold dimension'' mentioned in the introduction is intimately connected with dimension of $V$: a product action on $M\times S(V\oplus\mathbb{R})$ can be readily modified to an exotic one with methods at hand. In general, as soon as we grasp the fixed point set (with its normal representation) of a locally linear $G$\nobreakdash-action, we can apply the same principle to alternate the action.

On the other hand, free $G$-actions on $M\times S^n$ give rise to coverings of orbit spaces $(M\times S^n)/G$. While all of these are of the same homotopy type, it seems unlikely that they share the diffeomorphism type. Smooth classification of orbit spaces is therefore the key to classification of actions on $M\times S^n$. However, the classification of such manifolds seems to be a challenging problem even for $M\in \mathcal{M}_\textsl{As}$. Nonetheless, we believe that in this case any free $\mathbb{Z}_{p}$\nobreakdash-action on $M\times S^1$ is conjugate to a product one.\medskip

\noindent\textbf{Acknowledgements.} We wish to thank the referee for a thorough review. Their detailed comments improved the final version of this manuscript immensely.

\bibliographystyle{amsplain}
\bibliography{Asymmetric.bib}\bigskip

\noindent\textsc{Zbigniew Błaszczyk}\\
Faculty of Mathematics and Computer Science\\
Adam Mickiewicz University\\
Umultowska 87\\
61-614 Poznań, Poland\\
\texttt{blaszczyk@amu.edu.pl}\medskip

\noindent\textsc{Marek Kaluba}\\
Faculty of Mathematics and Computer Science\\
Adam Mickiewicz University\\
Umultowska 87\\
61-614 Poznań, Poland\\
\texttt{kalmar@amu.edu.pl}

\end{document}